\documentclass[11pt]{article}
\usepackage{epic,latexsym,amssymb}
\usepackage{color}
\usepackage{tikz}
\usepackage{amsmath,amsthm}
\usepackage{dirtytalk}
\usepackage{multirow}
\usepackage{pdfpages}
\usepackage{fancyhdr}

\newtheorem{thm}{Theorem}
\newtheorem{lem}{Lemma}

\newtheorem{rem}{Remark}

\newtheorem{prop}{Proposition}

\newtheorem{defn}{Definition}

\newcommand{\Zekhaya}[1]{{\color{red} #1}}

\let\oldenumerate\enumerate
\renewcommand{\enumerate}{
  \oldenumerate
  \setlength{\itemsep}{0pt}
  \setlength{\parskip}{0pt}
  \setlength{\parsep}{0pt}
}

\def\vertex(#1){\put(#1){\circle*{2}}}
\def\vertexo(#1){\put(#1){\circle{2}}}
\def\vert(#1){\put(#1){\circle*{1.5}}}
\def\verto(#1){\put(#1){\circle{1.5}}}
\def\lab(#1)#2{\put(#1){\makebox(0,0)[c]{#2}}}

\begin{document}

\title{Nordhaus-Gaddum inequalities for the number of $1$-nearly independent vertex subsets}

\author{Eric O. D. Andriantiana \\
	Department of Mathematics (Pure and Applied) \\
	Rhodes University \\
	Makhanda, 6140 South Africa\\
    and National Institute for Theoretical and Computational Sciences (NITheCS)\\
    Makhanda, South Africa\\
	\small \tt Email: e.andriantiana@ru.ac.za \\\\Zekhaya B. Shozi\\
	School of Science \& Agriculture\\
	University of KwaZulu-Natal\\
	Durban, 4000 South Africa\\
    and National Institute for Theoretical and Computational Sciences (NITheCS)\\
    Durban, South Africa\\
\small \tt Email: shoziz1@ukzn.ac.za\\
\small \tt Email: zekhaya@aims.ac.za
}

\date{}
\maketitle

\begin{abstract}
For a graph $G$, a vertex subset is called \emph{$1$-nearly independent} if the subgraph it induces contains exactly one edge. Let $\sigma_1(G)$ denote the number of such subsets in $G$. In this paper, we study Nordhaus-Gaddum type inequalities for $\sigma_1$, that is, bounds on the sum $\sigma_1(G)+\sigma_1(\overline{G})$, where $\overline{G}$ denotes the complement of $G$. 

We establish that, for any $n$-vertex graph $G$, we have
$\sigma_1(G)+\sigma_1(\overline{G})\geq n(n-1)/2,$
with equality if and only if $G$ is either complete or edgeless. We further obtain that among all trees of order $n$, the star $K_{1,n-1}$ uniquely minimises $\sigma_1(T)+\sigma_1(\overline{T})$. Finally, we prove that for all graphs of order $n \ge 6$,
\[
\sigma_1(G)+\sigma_1(\overline{G}) \le \frac{27}{64}\,2^{n} + \frac{1}{2}(n+2)(n-3),
\]
with equality if and only if $G$ or $\overline{G}$ is isomorphic to $3K_2 \cup \overline{K_{n-6}}$.
\end{abstract}

{\small \textbf{Keywords: }{ $1$-nearly independent vertex subset; Nordhaus-Gaddum inequalities; extremal trees; extremal graphs}} \\
\indent {\small \textbf{AMS subject classification:} 05C30; 05C05; 05C35}
\newpage
	
\section{Introduction}

A \emph{simple} and \emph{undirected} graph $G$ is an ordered pair of sets $(V(G),E(G))$, where $V(G)$ is a nonempty set of elements which are called \emph{vertices} and $E(G)$ is a (possibly empty) set  of $2$-element subsets of $V(G)$ which are called \emph{edges}. The number of elements in $V(G)$ is the \emph{order} of $G$, while the number of elements in $E(G)$ is the \emph{size} of $G$. {$\overline{G}$ denotes the complement of $G$, which is  defined as $\overline{G} = (V(G), \{uv \mid u,v \in V(G), u\ne v \text{ and } uv\notin E(G)\})$. } For graph theory notation and terminology, we generally follow~\cite{henning2013total}. 

A new generalisation of independent vertex subsets has recently been proposed \cite{andriantiana2024number}. For an integer $k\ge 0$, a subset $I_k$ of the vertices of a graph $G$ is a \emph{$k$-nearly independent vertex subset} of $G$ if the subgraph induced by $I_k$ in $G$ contains exactly $k$ edges. Denote by $\sigma_k(G)$ the number of $k$-nearly independent vertex subsets of a graph $G$. Thus, $\sigma_0(G)$ is the number of subsets that are pairwise nonadjacent in $G$, also known as the \emph{Merrifield-Simmons index} of $G$ (see \cite{Merrifield198055}). 

Among various findings in \cite{andriantiana2024number}, it is shown that while the star $K_{1,n-1}$ has the largest $\sigma_0$ among all trees (and connected graphs) with $n$ vertices, it has the smallest $\sigma_1$. It is also revealed in the paper that the largest value of $\sigma_1$ among all graphs with $n\geq 6$ vertices is only reached by three or four copies of $K_2$ completed with isolated vertices.

A subset $S\subseteq E(G)$ is called a 1-nearly independent edge subset if exactly one pair of edges in $S$ share a common endpoint. The function $Z_1(G)$ counts such subsets. The study of $Z_1$ has been initiated in \cite{Andriantiana2025NumberOf1Nearly}, providing a characterisation of extremal trees and graphs.


Inequalities that relate the sum or product of a graph invariant to the same invariant of its complement are usually referred to as Nordhaus-Gaddum inequalities. It is a fairly popular topic in extremal graph theory, see the survey \cite{AouchicheHansen2013Survey,goddard1992some, harary1996nordhaus} and the recent papers \cite{FaughtKemptonKnudson2024,henning2011nordhaus, hernando2014nordhaus, LiGuo2024LaplacianKthNG}. In \cite{andriantiana2022nordhaus}, for example, Nordhaus-Gaddum type bounds for the number, $\eta$, of connected induced subgraphs in graphs is studied. They proved that  among all graphs $G$ with $n$ vertices, $\eta(G) + \eta(\overline{G})$ is minimum if and only if $G$ has no induced path on four vertices. Also, among all trees $T$ with $n$ vertices, $\eta(T) + \eta(\overline{T})$ is minimum if and only if $T$ is the star $K_{1,n-1}$. The tree that attains the maximum $\eta(T) + \eta(\overline{T})$ is the tree whose degree sequence is $(\lceil n/2 \rceil, \lfloor n/2 \rfloor, 1, \ldots, 1)$.

In this paper, we study Nordhaus-Gaddum inequalities for $\sigma_1$. We determine bounds for $\sigma_1(G) + \sigma_1(\overline{G})$. The rest of the paper is structured as follows. Section \ref{sec:preliminary} is the preliminary section, where we provide some graph-theoretic terminology and notation which we shall adhere to throughout the paper. Also, the explicit formulas for $\sigma_1(G) + \sigma_1(\overline{G})$ is presented for some selected classes of graphs $G$ in terms of their order $n$. Our main results are in Section \ref{sec:main-result}. Firstly, in Section \ref{subsec:minimal-graphs}, we present a tight lower bound on $\sigma_1(G) + \sigma_1(\overline{G})$ if $G$ is a general graph with $n$ vertices. We also show that this bound is attained if $G$ is an edgeless graph $\overline{K_n}$ or if $G$ is a complete graph $K_n$. Secondly, in Section \ref{subsec:Minimal-trees}, we present a tight lower bound on $\sigma_1(T) + \sigma_1(\overline{T})$ if $T$ is a tree with $n$ vertices. We show that this bound is uniquely attained by the star $K_{1,n-1}$. Lastly, in Section \ref{subs:maximal-general-graphs}, we present a tight upper bound on $\sigma_1(G) + \sigma_1(\overline{G})$ if $G$ is a general graph with $n$ vertices. We show that this bound is attained if $G$ or $\overline{G}$ is the union of three copies of $K_2$ and $n-6$ copies of $K_1$, i.e., $3K_2 \cup \overline{K_{n-6}}$.

\section{Preliminary}
\label{sec:preliminary}

 Let $G$ be a graph. 
 We denote the degree of a vertex $v$ in $G$ by $\deg_G(v)$ or simply by $\deg(v)$ if there is no risk of confusion. 
 The \emph{open neighbourhood} of a vertex $v$ in $G$ is $N_G(v) = \{ u \in V(G)  \mid uv \in E(G)\}$, while its \emph{closed neighbourhood} is $N_G[v] = \{v\} \cup N_G(v)$. Again, if there is no risk of confusion, we write $N(v)$ rather than $N_G(v)$ and $N[v]$ rather than $N_G[v]$. For a subset $S$ of vertices of a graph $G$, we denote by $G - S$ the graph obtained from $G$ by deleting the vertices in $S$ and all edges incident to them. If $S = \{v\}$, then we simply write $G - v$ rather than $G - \{v\}$. 
 We denote the complete graph on $n$ vertices by $K_n$. For positive integers $r$ and $s$, we denote by $K_{r,s}$ the complete bipartite graph with partite sets $X$ and $Y$, where $|X|=r$ and $|Y|=s$. The complete bipartite graph $K_{1,n-1}$ is also called the \emph{star graph}. 

 Let $G_1$ and $G_2$ be any two graphs. We define the
 \emph{union} of $G_1$ and $G_2$ to be the graph
 $$G_1 \cup G_2 = (V(G_1)\cup V(G_2), E(G_1)\cup E(G_2)),$$
 and we define the \emph{join} of $G_1$ and $G_2$ to be the graph
 $$
 G_1\vee G_2=(V(G_1)\cup V(G_2), E(G_1)\cup E(G_2)\cup\{uv \mid u\in V(G_1)\text{ and } v\in V(G_2) \}).
 $$

 \begin{prop}
 \label{prop:complement-of-a-union-equals-join-of-complements}
 For any disjoint graphs $G_1$ and $G_2$, we have $\overline{G_1 \cup G_2} = \overline{G_1} \vee \overline{G_2}$.
 \end{prop}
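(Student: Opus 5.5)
We show that both graphs have the same vertex set and then that they have the same edge set, checking one unordered pair $\{u,v\}$ with $u\ne v$ at a time. Throughout, $V=V(G_1)\cup V(G_2)$, which is a disjoint union because $G_1$ and $G_2$ are disjoint.

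\emph{Vertex sets.} By the definition of complement, $V(\overline{G_1\cup G_2})=V(G_1\cup G_2)=V$. By the definition of join, $V(\overline{G_1}\vee\overline{G_2})=V(\overline{G_1})\cup V(\overline{G_2})=V(G_1)\cup V(G_2)=V$.

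\emph{Edge sets.} Fix distinct $u,v\in V$. There are two cases.

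\emph{Case 1: $u$ and $v$ lie in the same part, say $u,v\in V(G_1)$.*} Since $G_1$ and $G_2$ are disjoint, the pair $uv$ is not an edge of $G_2$. Hence $uv\in E(G_1\cup G_2)$ if and only if $uv\in E(G_1)$, so
\[
uv\in E(\overline{G_1\cup G_2}) \iff uv\notin E(G_1) \iff uv\in E(\overline{G_1}).
\]
In the join, $uv$ is not one of the added cross pairs, because both endpoints are in $V(G_1)$. It is also not in $E(\overline{G_2})$, because $u,v\notin V(G_2)$. So $uv\in E(\overline{G_1}\vee\overline{G_2})$ if and only if $uv\in E(\overline{G_1})$. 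The two conditions agree. The case $u,v\in V(G_2)$ is symmetric.

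\emph{Case 2: $u\in V(G_1)$ and $v\in V(G_2)$.} The pair $uv$ is an edge of neither $G_1$ nor $G_2$, so it is not an edge of $G_1\cup G_2$. Therefore $uv\in E(\overline{G_1\cup G_2})$. The join contains every pair $uv$ with $u\in V(G_1)$ and $v\in V(G_2)$, so $uv\in E(\overline{G_1}\vee\overline{G_2})$ as well.

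The two graphs have equal vertex sets and equal edge sets, so they are equal.

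The only point needing care is Case 1. There we use disjointness of $V(G_1)$ and $V(G_2)$ to rule out the possibility that a pair inside $V(G_1)$ is an edge of $G_2$, or appears in the join through $\overline{G_2}$ or through the cross pairs. Apart from that, the argument is just unwinding the definitions.
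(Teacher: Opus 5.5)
Your proof is correct and takes essentially the same approach as the paper. The paper observes tersely that $V(G_1)$ and $V(G_2)$ induce $\overline{G_1}$ and $\overline{G_2}$ in $\overline{G_1\cup G_2}$, and that disjointness makes every cross pair an edge. Your Case 1 and Case 2 are exactly these two facts, verified pair by pair.
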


 \begin{proof}
 $V(G_1)$ and $V(G_2)$ induce $\overline{G_1}$ and $\overline{G_2}$ in $\overline{G_1\cup G_2}$. Since $G_1$ and $G_2$ are disjoint, every vertex $u\in V(G_1)$ is joined to every vertex $v\in V(\overline{G_2})$ in $\overline{G_1\cup G_2}$.
 \end{proof}


The following recursive formula will be needed. 

\begin{lem}[\cite{andriantiana2024number}]
\label{lem:recursive-formula}
For any vertex $v$ of a graph $G$
\begin{align*}
\sigma_1(G)
=\sigma_1(G-v)+\sigma_1(G-N_G[v])+\sum_{u\in N_G(v)}\sigma_0(G-(N_G[u]\cup N_G[v])).
\end{align*}
\end{lem}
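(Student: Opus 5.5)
We prove the identity of Lemma~\ref{lem:recursive-formula} by partitioning the family of $1$-nearly independent subsets $S$ of $G$ according to how $S$ meets the vertex $v$. There are three types: those with $v\notin S$; those with $v\in S$ and $v$ isolated in $G[S]$; and those with $v\in S$ and $v$ an endpoint of the unique edge of $G[S]$. These classes are pairwise disjoint and cover every $1$-nearly independent subset. We then count each class separately.

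\emph{First class.} If $v\notin S$, then $G[S]=(G-v)[S]$. So these subsets are exactly the $1$-nearly independent subsets of $G-v$, and there are $\sigma_1(G-v)$ of them.

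\emph{Second class.} Suppose $v\in S$ and $v$ has no neighbour in $S$. Then $S\setminus\{v\}\subseteq V(G)\setminus N_G[v]$, and the unique edge of $G[S]$ lies inside $S\setminus\{v\}$. Conversely, take any $1$-nearly independent subset $T$ of $G-N_G[v]$. Adding $v$ creates no new edge, so $T\cup\{v\}$ is $1$-nearly independent in $G$. The map $S\mapsto S\setminus\{v\}$ is therefore a bijection onto the $1$-nearly independent subsets of $G-N_G[v]$, which contributes $\sigma_1(G-N_G[v])$.

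\emph{Third class.} Here the unique edge of $G[S]$ is $uv$ for exactly one $u\in N_G(v)$, so we split further by $u$. Fix $u\in N_G(v)$ and let $S$ contain both $u$ and $v$. Then $S$ is $1$-nearly independent with unique edge $uv$ if and only if the following two conditions hold:
\begin{enumerate}
\item no other vertex of $S$ is adjacent to $u$ or $v$, that is, $S\setminus\{u,v\}\subseteq V(G)\setminus(N_G[u]\cup N_G[v])$;
\item $S\setminus\{u,v\}$ is independent.
\end{enumerate}
Hence $S\mapsto S\setminus\{u,v\}$ is a bijection onto the independent subsets, including the empty set, of $G-(N_G[u]\cup N_G[v])$. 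This gives $\sigma_0(G-(N_G[u]\cup N_G[v]))$ for each $u$. Summing over $u\in N_G(v)$ gives the last term of the formula.

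The only delicate point is checking disjointness and exhaustiveness of the partition. In particular, in the third class the neighbour $u$ is uniquely determined by $S$, because $G[S]$ has exactly one edge. It is also worth confirming that the empty set is counted by $\sigma_0$, which corresponds to $S=\{u,v\}$. Adding the three counts yields the stated recursion.
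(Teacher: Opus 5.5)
Your proof is correct and complete. The three classes ($v\notin S$; $v\in S$ isolated in $G[S]$; $v$ an endpoint of the unique edge $uv$) are disjoint and exhaustive, and each is in bijection with the sets counted by the matching term. This includes the convention that $\sigma_0$ counts the empty set, which the paper also uses. The paper itself gives no proof of this lemma, only quoting it from \cite{andriantiana2024number}, so there is nothing here to compare against; your argument is the natural way to prove it.
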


Recall the following result  established in \cite{andriantiana2024number}.



\begin{thm}[{\rm \cite{andriantiana2024number}}]
\label{thm:if-G-is-a-graph-of-order-n-6-then-sigma-1-at-most-max}
    If $G$ is a graph of order $n \ge 1$, then
    \begin{align*}
        \sigma_1(G) \le  \frac{27}{64}\cdot 2^n,
    \end{align*}
    with equality if and only if $G \in \{ 3K_2 \cup \overline{K_{n-6}}, 4K_2 \cup \overline{K_{n-8}} \}$.
\end{thm}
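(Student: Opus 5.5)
The plan is to work with normalised quantities and reduce everything to connected components. For a graph $H$ of order $m$ put $a(H)=\sigma_0(H)/2^{m}$ and $b(H)=\sigma_1(H)/2^{m}$, which are the probabilities that a uniformly random vertex subset induces no edge, respectively exactly one edge. Two facts drive the argument:
\begin{itemize}
\item $a(H)+b(H)\le 1$, since the two families of subsets are disjoint;
\item if $H$ is connected with $m\ge 2$, then $a(H)\le 3/4$, since some fixed edge lies inside the random set with probability $1/4$. Equality holds only for $H=K_2$.
\end{itemize}
For a disjoint union $G=G_1\cup\cdots\cup G_t$, a set induces exactly one edge iff one component contributes exactly one edge and all others contribute none. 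Hence
\[
b(G)=\sum_{i=1}^{t} b(G_i)\prod_{j\ne i}a(G_j).
\]
Isolated vertices have $a=1,b=0$, so they can be ignored. This also explains the extremal graphs: $K_2$ has $(a,b)=(3/4,1/4)$, and $t$ copies give $t\,(3/4)^{t-1}/4$, which equals $27/64$ exactly for $t=3,4$.

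I would prove the theorem by induction on $n$, splitting into two cases.

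\textbf{Disconnected case.} Fix all components except the $k$-th. The expression above is affine in the pair $(a_k,b_k)$, with nonnegative coefficients $\prod_{j\ne k}a_j$ (for $b_k$) and $\sum_{i\ne k}b_i\prod_{j\ne i,k}a_j$ (for $a_k$). I would bound $b_k\le\min\{1-a_k,\,27/64\}$, using the induction hypothesis for the component when it is smaller than $G$. The aim is to show that the maximum over the feasible region $\{0<a_k\le 3/4,\ b_k\le\min(1-a_k,27/64)\}$ is attained at the corner $(a_k,b_k)=(3/4,1/4)$, i.e.\ when the component is $K_2$. When $b_k=1-a_k$ the value is linear in $a_k$. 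On the part where the cap $b_k\le 27/64$ binds, $a_k$ is at most $37/64$. That portion has to be compared to the corner directly, using that the other components satisfy the same constraints. Doing this component by component reduces to $t$ copies of $K_2$, and then $t(3/4)^{t-1}/4$ is maximised precisely at $t\in\{3,4\}$. Tracking equality shows that every nontrivial component must be $K_2$ and that $t\in\{3,4\}$.

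\textbf{Connected case.} This is the step I expect to be the main obstacle. Here I must show $b(G)<27/64$ for every connected $G$ on $n\ge 3$ vertices (for $n=2$, $b=1/4$). I would use Lemma~\ref{lem:recursive-formula} at a vertex $v$ of maximum degree $d$. I would bound $\sigma_1(G-v)$ and $\sigma_1(G-N_G[v])$ by the induction hypothesis, and each $\sigma_0(G-(N_G[u]\cup N_G[v]))$ by $2^{n-|N_G[u]\cup N_G[v]|}$. The crude estimate $\tfrac12 c+c\,2^{-1-d}+d\,2^{-d-1}$ with $c=27/64$ beats $c$ only for $d$ large enough, so small $d$ needs separate treatment.
\begin{itemize}
\item For $d\ge 5$ or so, I expect the crude estimate to suffice after checking.
\item For small maximum degree (paths, cycles, and graphs with $\Delta\le 4$), I would sharpen the bound on $\sigma_1(G-v)$. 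Since $G$ is connected and not $K_2$, $G-v$ cannot be a disjoint union of $K_2$'s and isolated vertices that attains the bound with $v$ attached. Alternatively, I would apply the recursion at a second vertex, together with the estimate $\sigma_0(H)\le (3/4)\,2^{|V(H)|}$ whenever $H$ has an edge.
\end{itemize}
If this becomes messy, I would fall back on a direct count $\sigma_1(G)=\sum_{uv\in E(G)}\sigma_0(G-(N_G[u]\cup N_G[v]))$. I would bound each summand by $2^{n-|N[u]\cup N[v]|}$ times $3/4$ raised to the number of edges found among disjoint non-neighbours, and finish with a small finite check for $n\le 8$.
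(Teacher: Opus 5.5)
\paragraph{Verdict: the disconnected case has a genuine gap.}
The paper gives no proof of this statement. It is quoted from \cite{andriantiana2024number}, with Remark~\ref{Rem:1} covering $n\le 5$, so your argument has to stand on its own, and its disconnected case does not. Write $A=a(G')$ and $B=b(G')$ for the union $G'$ of the remaining components. You are then maximising $Ab_k+Ba_k$ over $\{0<a_k\le \frac34,\ 0\le b_k\le \min(1-a_k,\frac{27}{64})\}$. The vertex $(\frac{37}{64},\frac{27}{64})$ of this region beats $(\frac34,\frac14)$ exactly when $A>B$. That holds for $G'=K_2$ or $G'=2K_2$, i.e.\ precisely in the near-extremal configurations. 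For $G'=K_2$ the relaxed bound is $\frac34\cdot\frac{27}{64}+\frac14\cdot\frac{37}{64}=\frac{118}{256}>\frac{27}{64}$. So no comparison ``using that the other components satisfy the same constraints'' can close the case, because $K_2$ is itself an admissible other component. The replacement is not monotone for genuine graphs either: $b(K_3\cup K_2)=\frac{13}{32}>\frac38=b(2K_2)$.

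What is missing is a \emph{joint} bound on $(\sigma_0(H),\sigma_1(H))$ for connected $H$. Already $G=H\cup K_2$ needs $\sigma_0(H)+3\sigma_1(H)\le \frac{27}{16}\,2^{|V(H)|}$. This does not follow from $\sigma_1(H)\le\frac{27}{64}\,2^{|V(H)|}$, $\sigma_0(H)+\sigma_1(H)\le 2^{|V(H)|}$ and $\sigma_0(H)\le\frac58\,2^{|V(H)|}$, which together still allow $\frac{118}{64}\,2^{|V(H)|}$.

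\paragraph{A possible repair, and the connected case.}
The identity $b(H\cup G')=a(G')\,b(H)+b(G')\,a(H)$ is nondecreasing in both $a(H)$ and $b(H)$. So it is enough to show that every connected $H$ of order at least $3$ is coordinatewise dominated by some $jK_2$, i.e.\ $a(H)\le(\frac34)^j$ and $b(H)\le \frac j4(\frac34)^{j-1}$. Then all components can be swapped at once, and $\frac t4(\frac34)^{t-1}\le\frac{27}{64}$ finishes, with no induction needed for this case. A connected $H$ of order $m\ge 3$ has $a(H)\le \frac12+2^{-m}$, which exceeds $\frac9{16}$ only for $H=P_3$, and $P_3$ is dominated by $K_2$. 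So the requirement reduces to two statements:
\begin{itemize}
\item $b(H)<\frac{27}{64}$;
\item $b(H)\le\frac38$ whenever $a(H)>\frac{27}{64}$.
\end{itemize}
Both need enough strictness to recover the equality characterisation.

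Your connected case is also still only a sketch. The crude recursion estimate succeeds for $\Delta\ge4$, but at $\Delta=3$ it gives $\frac{435}{1024}>\frac{432}{1024}$. The subcubic connected graphs of all orders are an infinite family, not covered by a check for $n\le 8$, and you leave them to unspecified sharpenings.
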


\begin{rem}
\label{Rem:1}
Theorem \ref{thm:if-G-is-a-graph-of-order-n-6-then-sigma-1-at-most-max} is stated for $n\geq 6$ in \cite{andriantiana2024number}.
From the Tables 1 and 2 in the appendix of the same paper, we have
\begin{align*}
&\max_{|V(G)|=1} \sigma_1(G)=0\leq \frac{27}{64}2^1,
\max_{|V(G)|=2} \sigma_1(G)=1\leq \frac{27}{64}2^2,
\max_{|V(G)|=3} \sigma_1(G)=3\leq \frac{27}{64}2^3,\\
&\max_{|V(G)|=4} \sigma_1(G)=6\leq \frac{27}{64}2^4,
\max_{|V(G)|=5} \sigma_1(G)=12\leq \frac{27}{64}2^5.
\end{align*}
\end{rem}
We now calculate the explicit formulas for the graphs that will be proven to be extremal graphs in the next section. Before we present the explicit formulas, we begin with the following important definition.

\begin{defn}[\cite{andriantiana2024number}]
A graph $G$ is a good graph if for any edge $uv\in E(G)$, we have $N_G[u]\cup N_G[v]=V(G)$.
\end{defn}
If $uv$ is an edge in a good graph $G$, then it can only be contained in exactly one $1$-nearly independent vertex subset which is $\{u,v\}$. Hence, we have the following proposition.
\begin{prop}\cite{andriantiana2024number}
\label{Prop:GD}
For any good graph $G$, we have $\sigma_1(G)=|E(G)|$.
\end{prop}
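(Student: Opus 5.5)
The statement to prove is Proposition~\ref{Prop:GD}: for a good graph $G$, $\sigma_1(G)=|E(G)|$. I will set up a bijection between $E(G)$ and the family of $1$-nearly independent vertex subsets of $G$. The map sends an edge $uv$ to the set $\{u,v\}$. This set induces exactly the single edge $uv$, so it is $1$-nearly independent. The map is injective, since distinct edges have distinct endpoint pairs.

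For surjectivity, let $S$ be any $1$-nearly independent subset of $G$, and let $uv$ be the unique edge of $G[S]$. I will show that $S=\{u,v\}$. Suppose some $w\in S\setminus\{u,v\}$ exists. Because $G$ is good, $N_G[u]\cup N_G[v]=V(G)$, so $w\in N_G[u]\cup N_G[v]$. Since $w\neq u,v$, this forces $w\in N_G(u)$ or $w\in N_G(v)$. In either case $G[S]$ contains one of the edges $wu$ or $wv$. Each of these is distinct from $uv$, so $G[S]$ would have at least two edges, which contradicts $S$ being $1$-nearly independent. Hence $S=\{u,v\}$, which is the image of the edge $uv$.

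Combining injectivity and surjectivity, the map is a bijection, so $\sigma_1(G)=|E(G)|$. There is no real obstacle here. The only point needing care is that the goodness condition is applied to the particular edge $uv$ inside $S$. This is exactly what rules out any extra vertex in $S$.
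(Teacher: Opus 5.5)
Your proof is correct and takes essentially the paper's approach. The paper justifies this in one line: an edge $uv$ of a good graph lies in exactly one $1$-nearly independent set, namely $\{u,v\}$, because any extra vertex would fall in $N_G[u]\cup N_G[v]$ and create a second edge. Your bijection from $E(G)$ to the $1$-nearly independent sets spells out that same argument in full.
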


It is easy to check that $K_n$, $K_{1,n-1}$ and $\overline{3K_3\cup \overline{K_{n-6}}}$ are good graphs.

\begin{prop}
    \label{prop:K-n-and-edgless-equal-min-general-graph}
    For any integer $n\ge 1$, we have
    \begin{align*}
        \sigma_1(K_n) + \sigma_1(\overline{K_n}) = \frac{n(n-1)}{2}.
    \end{align*}
\end{prop}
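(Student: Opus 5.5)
The identity splits into two independent computations, one for each graph.

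For $K_n$, I will use Proposition~\ref{Prop:GD}. The paper has already noted that $K_n$ is a good graph. Indeed, for any edge $uv$ we have $N[u]=V(K_n)$, so $N[u]\cup N[v]=V(K_n)$. Hence $\sigma_1(K_n)=|E(K_n)|=\binom{n}{2}=n(n-1)/2$.

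The same count can be seen directly, which also covers the degenerate case $n=1$. Any subset of $V(K_n)$ of size $k$ induces $\binom{k}{2}$ edges, and $\binom{k}{2}=1$ exactly when $k=2$. So the $1$-nearly independent subsets are precisely the $\binom{n}{2}$ pairs of vertices.

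For $\overline{K_n}$, the graph has no edges, so every induced subgraph has $0$ edges. Therefore no subset induces exactly one edge, and $\sigma_1(\overline{K_n})=0$.

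Adding the two values gives $\sigma_1(K_n)+\sigma_1(\overline{K_n})=n(n-1)/2$.

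There is no genuine obstacle here. The only point needing care is $n=1$, where both sides are $0$ and the direct counting argument already applies.
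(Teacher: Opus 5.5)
Your proof is correct and follows the paper's argument: $K_n$ is a good graph, so Proposition~\ref{Prop:GD} gives $\sigma_1(K_n)=\binom{n}{2}$, and $\overline{K_n}$ has no edges, so $\sigma_1(\overline{K_n})=0$. Your direct count is a valid self-contained check (a $k$-subset of $K_n$ induces $\binom{k}{2}$ edges, which equals $1$ only when $k=2$), but the extra care for $n=1$ is unnecessary, since $K_1$ is vacuously good and the same argument already covers it.
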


\begin{proof}
    Since $K_n$ is a good graph, we have
    \begin{align*}
        \sigma_1(K_n) = |E(K_n)| = \binom{n}{2} = \frac{n(n-1)}{2}.
    \end{align*}
    By the definition of $\sigma_1$, we have $\sigma_1(\overline{K_n})=0$ and hence the result follows immediately.
\end{proof}

\begin{prop}
    \label{prop:Star-equal-min-tree-and-connected-graph}
    For any integer $n\ge 2$, we have
    \begin{align*}
        \sigma_1(K_{1,n-1}) + \sigma_1(\overline{K_{1,n-1}}) = (n-1)^2.
    \end{align*}
\end{prop}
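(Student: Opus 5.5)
The plan is to compute the two terms separately and add them. For the first term I would use Proposition~\ref{Prop:GD}, since it was noted above that $K_{1,n-1}$ is a good graph. Let $c$ be the centre. Every edge has the form $cx$, and $N[c]=V(K_{1,n-1})$. Hence $N[c]\cup N[x]$ is the whole vertex set, and Proposition~\ref{Prop:GD} gives
\begin{align*}
\sigma_1(K_{1,n-1}) = |E(K_{1,n-1})| = n-1.
\end{align*}

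For the second term I would identify the complement explicitly. In $\overline{K_{1,n-1}}$ the centre $c$ is adjacent to nothing, and the $n-1$ leaves become pairwise adjacent. So $\overline{K_{1,n-1}} = K_{n-1}\cup K_1$; this also follows from Proposition~\ref{prop:complement-of-a-union-equals-join-of-complements} read in reverse, since $K_{1,n-1}=K_1\vee \overline{K_{n-1}}$.

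A $1$-nearly independent subset of $K_{n-1}\cup K_1$ induces exactly one edge. Its part inside $K_{n-1}$ must therefore be exactly a pair of vertices, because any set of $k$ vertices there induces $\binom{k}{2}$ edges and $\binom{k}{2}=1$ forces $k=2$. The isolated vertex $c$ may be included or not. This gives
\begin{align*}
\sigma_1(\overline{K_{1,n-1}}) = 2\binom{n-1}{2} = (n-1)(n-2).
\end{align*}
As a cross-check, the same value comes out of Lemma~\ref{lem:recursive-formula} applied at the isolated vertex $c$: then $\sigma_1(G)=2\sigma_1(K_{n-1})$ and the sum is empty.

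Adding the two terms gives $(n-1)+(n-1)(n-2)=(n-1)^2$. For $n=2$ both sides equal $1$, so the edge case needs no separate treatment.

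There is no real obstacle here. The only point needing care is the counting argument inside the clique, namely that exactly two clique vertices can be chosen, which I would state explicitly.
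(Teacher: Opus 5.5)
Your proposal is correct and follows the paper's proof: you use that $K_{1,n-1}$ is good to get $\sigma_1(K_{1,n-1})=n-1$, identify $\overline{K_{1,n-1}}\cong K_1\cup K_{n-1}$ to get $2\binom{n-1}{2}=(n-1)(n-2)$, and add. Your explicit justification of the clique count, the cross-check via Lemma~\ref{lem:recursive-formula} and the $n=2$ check are small additions that the paper leaves implicit.
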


\begin{proof}
    Recall that $K_{1,n-1}$ is a good graph, and so
    \begin{align*}
        \sigma_1(K_{1,n-1}) = |E(K_{1,n-1})| = n-1.
    \end{align*}
    Note that $\overline{K_{1,n-1}} \cong K_1 \cup K_{n-1}$, so
    \begin{align*}
        \sigma_1(\overline{K_{1,n-1}}) = \sigma_1(K_1 \cup K_{n-1}) = 2 \cdot \binom{n-1}{2} = (n-1)(n-2).
    \end{align*}
    Thus, we have
    \begin{align*}
        \sigma_1(K_{1,n-1}) + \sigma_1(\overline{K_{1,n-1}}) = n-1 + (n-1)(n-2) = (n-1)^2,
    \end{align*}
    completing the proof.
\end{proof}

\begin{prop}
\label{prop:3K2-equal-max}
    For any integer $n\geq 6$, we have 
        \begin{align*}
            \sigma_1(3K_2 \cup \overline{K_{n-6}}) + \sigma_1(\overline{3K_2 \cup \overline{K_{n-6}}}) = \frac{27}{64}\cdot 2^n + \frac{1}{2}(n+2)(n-3).
        \end{align*}
\end{prop}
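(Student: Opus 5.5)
The plan is to compute the two terms separately, the first directly and the second via Proposition~\ref{Prop:GD}, and then add them.

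\emph{First term.} Let $G = 3K_2 \cup \overline{K_{n-6}}$. A $1$-nearly independent set must contain both endpoints of exactly one of the three copies of $K_2$, which gives $3$ choices. From each of the other two copies it may contain at most one vertex, which gives $3$ choices per copy (neither endpoint, or one of the two). The $n-6$ isolated vertices may be included arbitrarily, which gives $2^{n-6}$ choices. Hence
\[
\sigma_1(G)=3\cdot 3^2\cdot 2^{n-6}=\frac{27}{64}\cdot 2^n,
\]
which agrees with the equality case of Theorem~\ref{thm:if-G-is-a-graph-of-order-n-6-then-sigma-1-at-most-max}.

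\emph{Second term.} By Proposition~\ref{prop:complement-of-a-union-equals-join-of-complements}, $\overline{G}=\overline{3K_2}\vee K_{n-6}$. This is the complete graph $K_n$ with the three edges of a perfect matching on six vertices removed. I will show that $\overline{G}$ is a good graph by taking an edge $uv$ and checking $N[u]\cup N[v]=V$ in two cases.

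If $u$ or $v$ lies in the $K_{n-6}$ part, that vertex is adjacent to every other vertex. Its closed neighbourhood is then already all of $V$.

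If both $u$ and $v$ lie in $\overline{3K_2}$, let $u'$ and $v'$ be their partners in the removed matching. Since $uv$ is an edge, $u'\neq v$, and $u'\neq v'$ because the matching pairs are distinct. In $\overline{G}$ the only vertex missing from $N[u]$ is $u'$. But $u'$ is adjacent to $v$, since $u'\neq v$ and $u'\neq v'$, so $u'\in N[v]$. Hence $N[u]\cup N[v]=V$.

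So $\overline{G}$ is good, and Proposition~\ref{Prop:GD} gives
\[
\sigma_1(\overline{G})=|E(\overline{G})|=\binom{n}{2}-3.
\]

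\emph{Adding.} The final step is the identity
\[
\frac{n(n-1)}{2}-3=\frac{n^2-n-6}{2}=\frac12(n+2)(n-3).
\]
Adding this to $\frac{27}{64}2^n$ gives the stated formula.

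The only step that needs care is the goodness verification in the second term. Even that is short once $\overline{G}$ is described as $K_n$ minus a perfect matching on six vertices.
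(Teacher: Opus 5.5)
Your proof is correct and follows the same route as the paper. You evaluate $\sigma_1(3K_2\cup\overline{K_{n-6}})=\frac{27}{64}\cdot 2^n$, show the complement is a good graph, and apply Proposition~\ref{Prop:GD} with an edge count, whereas the paper takes the first term from the equality case of Theorem~\ref{thm:if-G-is-a-graph-of-order-n-6-then-sigma-1-at-most-max}, merely asserts goodness, and counts edges via $\frac12[6(n-2)+(n-6)(n-1)]$; your direct count, explicit goodness check and use of $\binom{n}{2}-3$ make the argument more self-contained, but the approach is the same.
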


\begin{proof}
    By Theorem \ref{thm:if-G-is-a-graph-of-order-n-6-then-sigma-1-at-most-max}, we have $\sigma_1(G) = \sigma_1(3K_2 \cup \overline{K_{n-6}}) = \frac{27}{64}\cdot 2^n$. 



    $\overline{3K_2 \cup \overline{K_{n-6}}}$ is a good graph, and so by Proposition \ref{Prop:GD},
    we have
    \begin{align*}
        \sigma_1(\overline{3K_2\cup \overline{K_{n-6}}}) = \left|E(\overline{3K_2\cup \overline{K_{n-6}}})\right| &= \frac{1}{2} \displaystyle \sum\limits_{v \in V(\overline{3K_2\cup \overline{K_{n-6}}})} \deg (v)\\
        &=\frac{1}{2}\left[ 6(n-2) + (n-6)(n-1)\right]\\
        &=\frac{1}{2}(n^2 - n -6)\\
        &=\frac{1}{2}(n+2)(n-3).
    \end{align*}
    Thus, 
    \begin{align*}
        \sigma_1(3K_2\cup \overline{K_{n-6}}) + \sigma_1(\overline{3K_2\cup \overline{K_{n-6}}}) = \frac{27}{64}\cdot 2^n + \frac{1}{2}(n+2)(n-3),
    \end{align*}
    completing the proof.
\end{proof}


\section{Main result}
\label{sec:main-result}

In this section we present 
a lower and an upper bound on $\sigma_1(G) + \sigma_1(\overline{G})$ if $G$ is a general graph with $n$ vertices and a lower bound if $G$ is a tree with $n$ vertices. The extremal graphs that achieve the equality on each of these bounds are characterised.

\subsection{Minimal general graphs}
\label{subsec:minimal-graphs}

In this subsection, we prove that, among all general graphs of order $n$, the smallest value of $\sigma_1(G) + \sigma_1(\overline{G})$ is only reached by the elements of $\{K_n, \overline{K_n}\}$.

\begin{thm}
\label{thm:if-G-has-order-n-then-NG-at-least-n-choose-2}
    If $G$ is a graph of order $n$, then
    \begin{align}
    \label{ineq:ND-at-least-n-choose-2}
        \sigma_1(G) + \sigma_1(\overline{G}) \ge \frac{n(n-1)}{2},
    \end{align}
    with equality if and only if $G \in \{ K_n , \overline{K_n}\}$.
\end{thm}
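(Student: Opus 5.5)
The idea is to assign each pair of distinct vertices $\{u,v\}$ of $G$ its own $1$-nearly independent subset in $G$ or in $\overline{G}$. The pair $\{u,v\}$ itself induces exactly one edge in whichever of $G$ or $\overline{G}$ contains the edge $uv$, and exactly one of them does. So every $2$-subset of $V(G)$ is a $1$-nearly independent set of exactly one of $G$, $\overline{G}$. Writing $\sigma_1^{(j)}(H)$ for the number of $1$-nearly independent subsets of size $j$ in $H$, this gives
\begin{align*}
\sigma_1(G)+\sigma_1(\overline{G}) = \binom{n}{2} + \sum_{j\ge 3}\left(\sigma_1^{(j)}(G)+\sigma_1^{(j)}(\overline{G})\right) \ge \frac{n(n-1)}{2}.
\end{align*}
This proves the inequality. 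Equality holds exactly when neither $G$ nor $\overline{G}$ has a $1$-nearly independent subset of size at least $3$.

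For the equality case, the \emph{if} direction is Proposition~\ref{prop:K-n-and-edgless-equal-min-general-graph}. For the converse, assume $G\notin\{K_n,\overline{K_n}\}$; we exhibit a $3$-subset inducing exactly one edge in $G$ or in $\overline{G}$. Since $G$ has both an edge and a non-edge, we can pick a vertex $w$ incident to both an edge and a non-edge of $G$. This is possible because if every vertex were incident only to edges or only to non-edges, the graph would be a disjoint union of $K_n$-type components with no edges between them; having both an edge and a non-edge then forces some vertex to be incident to both.

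So let $wa\in E(G)$ and $wb\notin E(G)$ with $a\neq b$, and consider $\{w,a,b\}$. If $ab\notin E(G)$, then $\{w,a,b\}$ induces exactly one edge ($wa$) in $G$. If $ab\in E(G)$, then in $\overline{G}$ the set induces only the edge $wb$, so it is $1$-nearly independent in $\overline{G}$. Either way the sum strictly exceeds $\binom n2$.

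The only point needing care is justifying the choice of $w$. Cleanly: the graph $G$ and its complement $\overline{G}$ cannot both be disconnected. If $G$ has an edge and a non-edge, then whichever of $G$, $\overline{G}$ is connected contains a path joining the ends of an edge of the other graph. Following that path, one finds two consecutive vertices where edge and non-edge types switch, which yields $w$. Alternatively, if no vertex is mixed, then each vertex is either universal or isolated in $G$. A universal vertex and an isolated vertex cannot coexist, so $G$ would be $K_n$ or $\overline{K_n}$. This second argument is the simplest and is the one I would use.
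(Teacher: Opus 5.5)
Your proof is correct and follows essentially the same route as the paper. The bound comes from the $\binom{n}{2}$ two-element sets, each of which is $1$-nearly independent in exactly one of $G$ and $\overline{G}$. For equality, you take a vertex $w$ with $1\le \deg_G(w)\le n-2$, a neighbour $a$ and a non-neighbour $b$, and note that $\{w,a,b\}$ induces exactly one edge in $G$ if $ab\notin E(G)$ and in $\overline{G}$ if $ab\in E(G)$; this is the paper's two-case argument in slightly streamlined form, and your closing step (every vertex is universal or isolated, and the two cannot coexist) matches the paper's conclusion.
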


\begin{proof}
For any graph $G$ of order $n$, we have
\begin{align*}
	\sigma_1(G)+\sigma_1(\overline{G})\geq |E(G)| + |E(\overline{G})|= |E(K_n)| = \binom{n}{2} =\frac{n(n-1)}{2}.
\end{align*}

To show that equality is attained only if $G\in\{K_n,\overline{K_n}\}$, it is sufficient to show that equality is reached only if $G$ is either a $0$-regular graph or an $(n-1)$-regular graph. Suppose, to the contrary, that $\sigma_1(G)+\sigma_1(\overline{G})=\frac{n(n-1)}{2}$ and $G$ contains a vertex $u$ such that $1\le \deg_G(u)\le n-2$. We show that $G$ or $\overline{G}$ contains a $1$-nearly independent vertex subset of cardinality at least $3$. Since $\deg_G(u) \le n-2$, we have $V(G) \setminus N_G(u) \ne \emptyset$. Let $v \in V(G)\setminus N_G(u)$.

Suppose that there exists a vertex $w \in N_G(u)$ such that $vw \notin E(G)$. Then, the set $\{u,w,v\}$ is a $1$-nearly independent vertex subset of $G$, implying that  $	\sigma_1(G)+\sigma_1(\overline{G})>\frac{n(n-1)}{2}$, a contradiction.

Hence, we may assume that every vertex $w \in N_G(u)$ is adjacent to $v$ in $G$. 
Then $uv\in E(\overline{G})$, $uw \notin E(\overline{G})$ for every $w\in N_G(u)$, and $vw \notin E(\overline{G})$ for every $w\in N_G(u)$. Thus, 
the set $\{u,v,w\}$ is a $1$-nearly independent vertex subset of $\overline{G}$ for every $w\in N_G(u)$, implying that  $	\sigma_1(G)+\sigma_1(\overline{G})>\frac{n(n-1)}{2}$, a contradiction.

We deduce, therefore, that if $G$ is a graph of order $n$ satisfying $\sigma_1(G) + \sigma_1(\overline{G}) = \frac{n(n-1)}{2}$,
then every vertex of $G$ has degree either $0$ or $n-1$ in $G$. If there is a vertex of degree $0$ in $G$, then there is no vertex of degree $n-1$, and thus $G \cong \overline{K_n}$. On the other hand, if there is a vertex of degree $n-1$ in $G$, then there is no vertex of degree $0$, and thus $G\cong K_n$.
\end{proof}

\subsection{Minimal trees}
\label{subsec:Minimal-trees}

We show in this subsection, that among all trees $T$ of order $n$, the star $K_{1,n-1}$ is the only one that reaches the minimum value of  $\sigma_1(T) + \sigma_1(\overline{T})$. 



\begin{thm}
\label{thm:lower-bound-on-trees-is-n-1-squared}
    If $T$ is a tree of order $n$, then
    \begin{align}
        \label{ineq:ND-atleast-n-1-squared}
        \sigma_1(T) + \sigma_1(\overline{T}) \ge (n-1)^2,
    \end{align}
    with equality if and only if $T \cong K_{1, n-1}$.
\end{thm}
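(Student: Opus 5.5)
The plan is to compute $\sigma_1(\overline{T})$ exactly in terms of the degree sequence of $T$, and then bound $\sigma_1(T)$ from below by an injection.

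\emph{Step 1: computing $\sigma_1(\overline{T})$.} A set $S$ is $1$-nearly independent in $\overline{T}$ exactly when $T[S]$ is a complete graph on $S$ with one edge removed. For $|S|\ge 4$, such a graph $K_{|S|}$ minus an edge contains a triangle, which is impossible in a tree. Hence $|S|\in\{2,3\}$.
\begin{itemize}
\item For $|S|=2$ we get the non-edges of $T$, of which there are $\binom{n}{2}-(n-1)$.
\item For $|S|=3$ we get the induced paths $P_3$ of $T$. These are exactly the pairs of edges sharing a vertex, of which there are $\sum_{v}\binom{\deg_T(v)}{2}$.
\end{itemize}
Thus
\begin{align*}
\sigma_1(\overline{T})=\binom{n}{2}-(n-1)+\sum_{v\in V(T)}\binom{\deg_T(v)}{2}.
\end{align*}
Since $\binom{n}{2}-(n-1)=\binom{n-1}{2}$ and $(n-1)^2=(n-1)+2\binom{n-1}{2}$, inequality \eqref{ineq:ND-atleast-n-1-squared} becomes
\begin{align*}
\sigma_1(T)-(n-1)\ \ge\ \binom{n-1}{2}-\sum_{v}\binom{\deg_T(v)}{2}.
\end{align*}
The right-hand side counts unordered pairs of edges of $T$ minus pairs of incident edges, so it equals the number $M$ of unordered pairs of disjoint edges of $T$. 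The left-hand side counts the $1$-nearly independent sets of $T$ of size at least $3$, because the $2$-element ones are exactly the $n-1$ edges.

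\emph{Step 2: an injection.} We build an injection from pairs of disjoint edges into $1$-nearly independent sets of size $3$ in $T$. For an \emph{ordered} pair $(e,f)$ of disjoint edges with $e=ab$, let $d$ be the endpoint of $f$ farther from $e$ in the tree. Then $d$ has distance at least $2$ from both $a$ and $b$, so $\{a,b,d\}$ induces exactly the edge $ab$.

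The map is injective. From $\{a,b,d\}$ the unique edge determines $e=ab$, and hence $d$. Since $T$ is a tree, $f$ is then forced to be the first edge on the unique path from $d$ towards $e$. This gives at least $2M\ge M$ sets, which proves the inequality.

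\emph{Step 3: equality.} For the star, equality holds by Proposition~\ref{prop:Star-equal-min-tree-and-connected-graph}. If $T$ is not a star, then $T$ contains two disjoint edges (for instance, the first and last edge of a path with $3$ edges), so $M\ge1$. The injection from ordered pairs then yields $\sigma_1(T)-(n-1)\ge 2M>M$, so the inequality is strict.

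The main care point is the structural claim in Step 1 that no $1$-nearly independent set of $\overline{T}$ has four or more vertices, together with checking injectivity of the map in Step 2. Both rely only on $T$ being acyclic, so I do not expect a serious obstacle.
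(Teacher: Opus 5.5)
Your proof is correct, and it takes a different route from the paper's.

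\textbf{The paper's route.} The paper never computes $\sigma_1(\overline{T})$. It counts the $\binom{n}{2}$ two-element sets (edges of $T$ and of $\overline{T}$) together. Then, to each edge $uv$ of $\overline{T}$, it attaches one three-element set: $\{u,v,w\}$ in $\overline{T}$ when $u,v$ have a common $T$-neighbour $w$, and otherwise some $\{u,v,y\}$ in $T$ with $uy\in E(T)$. Strictness for non-stars is then read off from $\sigma_1(T)>|E(T)|$.

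\textbf{What your route does differently.} Your exact formula for $\sigma_1(\overline{T})$ reduces the inequality to a statement about $T$ alone, namely counting pairs of disjoint edges. Your injection then makes rigorous two points the paper leaves unchecked.
\begin{enumerate}
\item[(i)] Distinct edges of $\overline{T}$ must receive distinct sets. A $T$-set $\{u,v,y\}$ with edge $uy$ contains two $\overline{T}$-edges, $uv$ and $vy$, and the paper's choice can assign it to both. For example, in $T=P_5$ with path $1\,2\,3\,4\,5$, the paper's rule allows both $\overline{T}$-edges $14$ and $15$ to be sent to $\{1,4,5\}$.
\item[(ii)] The set witnessing $\sigma_1(T)>|E(T)|$ must not be one already counted among the attached sets.
\end{enumerate}

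\textbf{What your route gains.} Your map is in fact a bijection from ordered pairs of disjoint edges onto the $1$-nearly independent $3$-subsets of $T$. Combined with Step~1, this gives the sharper bound $\sigma_1(T)+\sigma_1(\overline{T})\ge (n-1)^2+M$. The equality case then follows at once from $M\ge 1$ for non-stars.

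The paper's argument is shorter and more local, but as written it needs exactly the injectivity and non-double-counting that you verify.
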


\begin{proof}
By Proposition \ref{prop:Star-equal-min-tree-and-connected-graph}, we know that 
$$
\sigma_1(K_{1,n-1})+\sigma_1(\overline{K_{1,n-1}})=(n-1)^2.
$$
We are left to show that for any $n$-vertex tree $T\neq K_{1,n-1}$, we have
$$
\sigma_1(T)+\sigma_1(\overline{T})>(n-1)^2.
$$
Note that 
$$|E(\overline{T})| = \binom{n}{2}-(n-1)=\frac{n(n-1)}{2}-(n-1)=\binom{n-1}{2}.$$

Thus, $T$ and $\overline{T}$ have at least $$n-1 + \binom{n-1}{2}=\binom{n}{2}=(n-1)^2-\binom{n-1}{2}$$ $1$-nearly independent vertex subsets of cardinality $2$. Next, we show that $T$ and $\overline{T}$ together have at least $\binom{n-1}{2}$ $1$-nearly independent vertex subsets with three or more vertices.

Let $uv$ be an edge of $\overline{T}$. If $N_{\overline{T}}[u]\cup N_{\overline{T}}[v]\neq V(\overline{T})$, then $V(\overline{T})\setminus (N_{\overline{T}}[u]\cup N_{\overline{T}}[v]) \ne \emptyset$ and for any vertex $w\in V(\overline{T})\setminus (N_{\overline{T}}[u]\cup N_{\overline{T}}[v])$, the set $\{u,v,w\}$ is a $1$-nearly independent vertex subset of $\overline{T}$.

Now, suppose that $N_{\overline{T}}[u]\cup N_{\overline{T}}[v]= V(\overline{T})$. Note that we cannot have $N_{\overline{T}}(u)= N_{\overline{T}}(v)$, as this would imply that $u$ is isolated in $T$ and making $T$ disconnected, contradicting the fact that it is a tree. So we must have $N_{\overline{T}}(u)\neq N_{\overline{T}}(v)$. Without loss of generality (swapping the labels $u$ and $v$ if needed), we can assume that there is a vertex $y\in N_{\overline{T}}(v)\setminus N_{\overline{T}}(u)$.  But then the set $\{u,v,y\}$ is a $1$-nearly independent subset of $T$, where the edge is $uy$.

Thus, to each of the $\binom{n-1}{2}$ edges in $\overline{T}$, we can associate at least one $1$-nearly independent subset either in $T$ or in $\overline{T}$ with three or more vertices. This means that
\begin{align*}
    \sigma_1(T)+\sigma_1(\overline{T})\geq |E(T)|+|E(\overline{T})|+\binom{n-1}{2} = \binom{n}{2} + \binom{n-1}{2} =(n-1)^2.
\end{align*}
If $T\neq K_{1,n-1}$, then $\sigma_1(T)> |E(T)|=n-1$, and thus $\sigma_1(T)+\sigma_1(\overline{T}) > (n-1)^2$. To see this, for the equality $\sigma_1(T)= |E(T)|$ to be reached, every edge $uv$ of $T$ can only be contained in one $1$-nearly independent vertex subset, which is $\{u,v\}$. So, if $u$ is a leaf in $T$ attached to $v$, then $N_T[u]\cup N_T[v]=N_T[v]=V(T)$. But this would mean that $T=K_{1,n-1}$. 
\end{proof}

\subsection{Maximal general graphs}
\label{subs:maximal-general-graphs}

In this section, we show that the graph $3K_2\cup \overline{K_{n-6}}$ that is proven  in \cite{andriantiana2024number} to have the largest $\sigma_1(G)$ among all $n$-vertex graphs $G$ also has the largest $\sigma_1(G)+\sigma_1(\overline{G})$. 

\begin{thm}
   \label{thm:max-general-graphs}
   If $G$ is a graph of order $n\geq 6$, then
   \begin{align*}
        \sigma_1(G) + \sigma_1(\overline{G}) \le f(n)=\frac{27}{64}\cdot 2^n + \frac{1}{2}(n+2)(n-3),
   \end{align*}
   with equality if and only if $G \in \{ 3K_2\cup \overline{K_{n-6}}, K_{n-6}\vee G_{6,4}\}$, where $G_{6,4}$ is a connected $4$-regular graph of order $6$, i.e., $G_{6,4} \cong \overline{3K_2}$.
\end{thm}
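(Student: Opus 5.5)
Write $H_n = 3K_2\cup\overline{K_{n-6}}$. Note that $K_{n-6}\vee G_{6,4}$ is exactly $\overline{H_n}$, by Proposition~\ref{prop:complement-of-a-union-equals-join-of-complements}. Proposition~\ref{prop:3K2-equal-max} already gives equality for both graphs in the statement, so what remains is the upper bound and the uniqueness. The plan is to reduce to a single inequality. By symmetry we may assume $\sigma_1(G)\ge\sigma_1(\overline{G})$. By Theorem~\ref{thm:if-G-is-a-graph-of-order-n-6-then-sigma-1-at-most-max}, $\sigma_1(G)\le \frac{27}{64}2^n$. It therefore suffices to show
\[
\sigma_1(G)+\sigma_1(\overline G)\le \frac{27}{64}2^n+\tfrac12(n+2)(n-3),
\]
and I would split according to how large $\sigma_1(\overline{G})$ is.

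\textbf{Case A: $\sigma_1(\overline G)$ is polynomial.} The key observation is that a $1$-nearly independent set of $\overline G$ of size $k\ge 3$ is a set inducing in $G$ a clique minus one edge. Suppose $\overline G$ has at most about $\frac12(n+2)(n-3)$ such sets, which happens when $G$ has few large near-cliques. Then I would bound $\sigma_1(G)$ by the deficit from the maximum. If $G\ne H_n$ and $G\ne 4K_2\cup\overline{K_{n-8}}$, I expect $\sigma_1(G)\le\frac{27}{64}2^n-c\,2^{n-k}$ for some fixed $k$. This should come from the stability side of the proof of Theorem~\ref{thm:if-G-is-a-graph-of-order-n-6-then-sigma-1-at-most-max}, using Lemma~\ref{lem:recursive-formula} at a vertex of maximum degree. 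An exponential deficit dominates the quadratic term for large $n$, and the small $n$ would be checked directly. For $G=4K_2\cup\overline{K_{n-8}}$, the complement $\overline G$ is again good, so $\sigma_1(\overline G)=\binom n2-4<\binom n2-3$, and this graph falls short of $f(n)$.

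\textbf{Case B: $\sigma_1(\overline G)$ is large.} Otherwise $\sigma_1(\overline G)$ is large, and since $\sigma_1(G)\ge\sigma_1(\overline G)$, both values are large. I would use the ``edge-charging'' idea from the proofs of Theorems~\ref{thm:if-G-has-order-n-then-NG-at-least-n-choose-2} and~\ref{thm:lower-bound-on-trees-is-n-1-squared}, run in reverse. Every $1$-nearly independent set $S$ of $G$ with $|S|\ge 3$ is nearly independent in $G$, so it contains a large clique of $\overline G$. Hence $S$ cannot also be nearly independent in $\overline G$ once $|S|\ge 4$; the only overlaps are $3$-sets inducing $P_3$-type patterns. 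The aim is to show that $\sigma_1(G)+\sigma_1(\overline G)\le 2^n\,\max(\ldots)$ with a constant strictly below $\frac{27}{64}$ whenever both terms are superpolynomial. I would apply Lemma~\ref{lem:recursive-formula} at a vertex $v$ simultaneously in $G$ and in $\overline G$, using $N_{\overline G}[v]=V\setminus N_G(v)$, and then induct on $n$. In one of the two graphs the deletion $G-N[v]$ leaves at most $(n-1)/2$ vertices, so the induction closes with a margin.

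\textbf{Main obstacle.} The hardest step will be the induction in Case~B. The polynomial correction $\frac12(n+2)(n-3)$ does not split cleanly under the recursion, so I would first prove a cruder bound, $\sigma_1(G)+\sigma_1(\overline G)\le \frac{27}{64}2^n+O(n^2)$ for all $G$, and then sharpen it by separately characterising the graphs where $\sigma_1(G)$ is within $O(n^2)$ of $\frac{27}{64}2^n$. My expectation is that these graphs are exactly $H_n$ and $4K_2\cup\overline{K_{n-8}}$. This reduces the problem to Case~A and the explicit computation of Proposition~\ref{prop:3K2-equal-max}. The base cases $n=6,7$ (and possibly up to $9$) would be verified by direct enumeration, with Remark~\ref{Rem:1} supplying the small-order maxima needed in the induction.
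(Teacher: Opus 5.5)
Your proposal does not yet prove the inequality, because both of its cases rest on claims stated as expectations.

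\textbf{Case~A.} The stability bound $\sigma_1(G)\le\frac{27}{64}2^n-c\,2^{n-k}$ for $G\notin\{3K_2\cup\overline{K_{n-6}},\,4K_2\cup\overline{K_{n-8}}\}$ is not supplied by Theorem~\ref{thm:if-G-is-a-graph-of-order-n-6-then-sigma-1-at-most-max}. That theorem gives only the maximum and its extremal graphs, with no quantitative gap. Without explicit $c$, $k$ and an explicit threshold for ``polynomial'', ``check small $n$ directly'' is not a finite verification.

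\textbf{Case~B.} This is where the theorem actually lives, and it is only an aim. Your disjointness observation (a set of size at least $4$ cannot be $1$-nearly independent in both $G$ and $\overline G$) only gives $\sigma_1(G)+\sigma_1(\overline G)\le 2^n+O(n^3)$, far from $\frac{27}{64}2^n$. The reduction in your last paragraph is also circular. A crude bound $\frac{27}{64}2^n+O(n^2)$, plus a list of graphs whose $\sigma_1$ is within $O(n^2)$ of the maximum, does not exclude a hypothetical $G$ with $\sigma_1(G)$ and $\sigma_1(\overline G)$ both slightly above $\frac{27}{128}2^n$. Such a $G$ would exceed $f(n)$ while neither value is near the maximum, and excluding it is precisely the unproven Case~B.

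\textbf{The induction step.} The one concrete mechanism you mention is in fact the paper's whole proof. You apply Lemma~\ref{lem:recursive-formula} at $v$ in both $G$ and $\overline G$, bound $\sigma_1(G-v)+\sigma_1(\overline G-v)\le f(n-1)$ by induction, and compare the remaining terms with $f(n)-f(n-1)=\frac{27}{64}2^{n-1}+n-1$. But your claim that this ``closes with a margin'' holds only when $k=\min(\deg_G v,\deg_{\overline G}v)\ge 4$, which is the paper's Case~II.
\begin{itemize}
\item For $k=0$ the recursion gives exactly $\sigma_1(G-v)+\sigma_1(\overline G-v)+\sigma_1(G-v)+n-1$. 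This closes with zero margin, and only by using Theorem~\ref{thm:if-G-is-a-graph-of-order-n-6-then-sigma-1-at-most-max} sharply. It is also the step through which the equality case passes from order $n-1$ to order $n$.
\item For $k=1,2,3$ the crude estimates do not close at all. For instance, if $v$ is a leaf whose neighbour has another neighbour, the non-inductive exponential terms are $\frac{27}{64}2^{n-2}+2^{n-3}=\frac{59}{256}2^n>\frac{54}{256}2^n=\frac{27}{64}2^{n-1}$.
\end{itemize}

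The paper handles $k\le 3$ by a structural case analysis (Cases~III--V). Once no vertex has $\min(\deg_G,\deg_{\overline G})\ge 4$, every degree lies in $\{0,1,2,3\}\cup\{n-4,\dots,n-1\}$. Then either a neighbour $x$ of $v$ of very high degree leaves at most three vertices in $G-(N_G[x]\cup N_G[v])$, or an edge in $G-N_G[v]$ cuts the relevant $\sigma_0$ by a factor $\frac34$.

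These low-degree configurations are exactly the ones near the extremal graph, all of whose degrees are $0$ or $1$. So they are the substance of the proof rather than a technicality. Together with the computer-checked base cases $6\le n\le 9$, they are what your sketch is missing.
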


\begin{proof}
    By Proposition \ref{prop:3K2-equal-max}, we know that if $G \in \{ 3K_2\cup \overline{K_{n-6}}, K_{n-6}\vee G_{6,4}\}$, then 
    \begin{align*}
        f(n)=\sigma_1(G) + \sigma_1(\overline{G}) = \frac{27}{64}\cdot 2^n + \frac{1}{2}(n+2)(n-3).
   \end{align*}
   The statement is verified for $6 \le n \le 9$ by exhaustive computation
(see Appendix~\ref{App},  where a SageMath code is provided along with its output).  
Assume the result holds for all graphs of order $6 \le n' < n$, where $n \ge 10$,
and let $G$ be a graph of order $n$.

    In the rest of this proof, $v$ is a vertex of $G$ with $\deg_G(v)=k$. So, $\deg_{\overline{G}}(v)=n-1-k$. By swapping $G$ and $\overline{G}$ if needed, we can assume that $k\leq n-1-k$  and thus $k\leq (n-1)/2,$
that is, $n\geq 2k+1$.

First, we consider the case of $k=0$.

\textbf{Case I: } Suppose that $k=0$, that is, $G$ or $\overline{G}$, say $G$, contains an isolated vertex. 

Then, by Lemma \ref{lem:recursive-formula}, we have
    \begin{align*}
        \sigma_1(G) =\sigma_1(G-v)+\sigma_1(G-N_G[v])+\sum_{u\in N_G(v)}\sigma_0(G-(N_G[u]\cup N_G[v])) = 2 \sigma_1(G-v).
    \end{align*}
    Since the vertex $v$ of degree $0$ in $G$ has degree $n-1$ in $\overline{G}$, again, by Lemma \ref{lem:recursive-formula}, we have
    \begin{align*}
        \sigma_1(\overline{G})&=\sigma_1(\overline{G}-v)+\sigma_1(\overline{G}-N_{\overline{G}}[v])+\sum_{u\in N_{\overline{G}}(v)}\sigma_0(\overline{G}-N_{\overline{G}}[u]\cup N_{\overline{G}}[v])\\
        &=\sigma_1(\overline{G}-v) + 0 + (n-1)(1)\\
        &=\sigma_1(\overline{G}-v) + n-1.
    \end{align*}
    Thus,
    \begin{align*}
        \sigma_1(G) + \sigma_1(\overline{G}) &= 2\sigma_1(G-v) + \sigma_1(\overline{G}-v) + n-1\\
        &=\sigma_1(G-v)  + \sigma_1(\overline{G}-v) + \sigma_1(G-v) + n-1.
    \end{align*}
    By the induction assumption, and by Theorem \ref{thm:if-G-is-a-graph-of-order-n-6-then-sigma-1-at-most-max}, we have
    \begin{align*}
        \sigma_1(G) + \sigma_1(\overline{G}) &\le \frac{27}{64}\cdot 2^{n-1} + \frac{1}{2}(n+1)(n-4) + \frac{27}{64}\cdot 2^{n-1} + (n-1)\\
        &=\frac{27}{64}\cdot 2^n + \frac{1}{2}(n+2)(n-3) + \frac{1}{2}(n+1)(n-4) - \frac{1}{2}(n+2)(n-3) + (n-1)\\
        &=\frac{27}{64}\cdot 2^n + \frac{1}{2}(n+2)(n-3) -(n-1) + (n-1)\\
        &=\frac{27}{64}\cdot 2^n + \frac{1}{2}(n+2)(n-3),     
    \end{align*}
    as desired. Hence, $\sigma_1(G)+\sigma_1(\overline{G})\le f(n)$. Equality occurs only if $G-v\in  \{ 3K_2\cup \overline{K_{n-1-6}}, K_{n-1-6}\vee G_{6,4}\}$. In this case, $G\in  \{ 3K_2\cup \overline{K_{n-6}}, K_{n-6}\vee G_{6,4}\}$. 

In each of the remaining cases, we determine an upper bound $f(n-1)+ C_k(n)$ for $\sigma_1(G)+\sigma_1(\overline{G})$. Then, we show that $C_k(n)\leq f(n)-f(n-1)=\frac{27}{64}\cdot2^{n-1}+ n-1$. For this, we show that $g_n(k)=C_k(n)-(f(n)-f(n-1))\leq 0$. This is done by showing that the derivative of $g_k(n)<0$ and the first value of $g_k(n)$ is non-positive. All of this implies the desired inequality that 
$$
\sigma_1(G)+\sigma_1(\overline{G})\leq f(n).
$$
Note the frequent use of the fact that $\ln(2)>\frac{1}{2}.$

    {\bf Case II:} Suppose that $k\geq 4$.

Note that if a vertex $x \in N_G(v)$ has no neighbour in $V(G)\setminus N_G(v)$ in $G$, then it is adjacent to every element of $N_{\overline{G}}(v)$ in $\overline{G}$. Hence, we may assume that either each element of $N_G(v)$ in $G$ has a neighbour in $V(G)\setminus N_G(v)$ or each element of $N_{\overline{G}}(v)$ in $\overline{G}$ has a neighbour in $V(\overline{G})\setminus N_{\overline{G}}(v)$.

    We can write
    $$
    \deg_G(v)=k\leq n-1-k=\deg_{\overline{G}}  (v)= k+i,
    $$
    for some non-negative integer $i$. For this to be possible, $n\geq 2(4) + 1 =  9$. Thus, if each $y\in N_{\overline{G}}(v)$ has a neighbour in $V(\overline{G})\setminus N_{\overline{G}}(v)$, we have
    \begin{align*}    
\sigma_1(G)+\sigma_1(\overline{G})
    &=\sigma_1(G-v)+\sigma_1(\overline{G}-v)+\sigma_1(G-N_G[v])+\sigma_1(\overline{G}-N_{\overline{G}}[v])\\
    &+\sum_{x\in N_G(v)}\sigma_0(G-N_G[v]-N_G[x]) +\sum_{y\in N_{\overline{G}}(v)}\sigma_0(\overline{G}-N_{\overline{G}}[v]-N_{\overline{G}}[y])\\
    &\leq f(n-1) +\frac{27}{64}\cdot 2^{k+i}+\frac{27}{64}\cdot 2^{k}+ k\cdot 2^{k+i}+(k+i)\cdot 2^{k-1},
    \end{align*}
      and if each $x\in N_G(v)$ has a neighbour in $V(G)\setminus N_G(v)$, we have
     \begin{align*}    
    \sigma_1(G)+\sigma_1(\overline{G})
    &+\sum_{x\in N_G(v)}\sigma_0(G-N_G[v]-N_G[x]) +\sum_{y\in N_{\overline{G}}(v)}\sigma_0(\overline{G}-N_{\overline{G}}[v]-N_{\overline{G}}[y])\\
    &\leq f(n-1) +\frac{27}{64}\cdot 2^{k+i}+\frac{27}{64}\cdot 2^{k}+ k\cdot 2^{k+i-1}+(k+i)\cdot 2^{k}.
    \end{align*}
Since
\begin{align*}
k\cdot 2^{k+i}+(k+i)\cdot 2^{k-1}-\Big( k\cdot 2^{k+i-1}+(k+i)\cdot 2^{k}\Big)
&=2^{k-1}(k\cdot 2^{i+1}+k+i-k\cdot 2^i -2(k+i)\\
&=2^{k-1}(k\cdot 2^i-(k+i))\geq 0,
\end{align*}
for all integers $k\geq 1$ and $i\geq 0$, we have
\begin{align*}    
\sigma_1(G)+\sigma_1(\overline{G})    &\leq f(n-1) +\frac{27}{64}\cdot 2^{k+i}+\frac{27}{64}\cdot 2^{k}+ k\cdot 2^{k+i}+(k+i)\cdot 2^{k-1},
    \end{align*}
    for both cases.
Note that this is under the constraint that $k+k+i=n-1.$

Now, we prove that 
$$
\frac{27}{64}\cdot 2^{k+i}+\frac{27}{64}\cdot 2^{k}+ k\cdot 2^{k+i}+(k+i)\cdot 2^{k-1}< f(n)-f(n-1)=\frac{27}{64}\cdot 2^{n-1}+n-1.
$$
For this, we show that 
$$
g(k,i)=\frac{27}{64}\cdot2^{k+i}+\frac{27}{64}\cdot2^{k}+ k\cdot2^{k+i}+(k+i)\cdot2^{k-1}-\frac{27}{64}\cdot2^{2k+i}-2k-i<0.
$$
First, we study the case where $i=0$. With $n\geq 10$ and $i=0$ we must have $k\geq 4$ to satisfy $n=2k+i+1$.
\begin{align*}
g(k,0)&=\frac{27}{64}\cdot2^{k}+\frac{27}{64}\cdot2^{k}+ k\cdot2^{k}+k\cdot2^{k-1}-\frac{27}{64}\cdot2^{2k}-2k\\
&=\frac{27}{32}\cdot2^{k}+ k\cdot 3\cdot 2^{k-1}-\frac{27}{64}\cdot2^{2k}-2k\\
&=27\cdot 2^{k-5}+k\cdot 3 \cdot 2^{k-1}-27\cdot 2^{2k-6}-2k\\
&=2^{k+1}\left(27\cdot 2^{-6}-27\cdot 2^{k-7}+\frac{3}{4}k\right)-2k\\
&=2^{k+1}\left(\frac{27}{64}(1-2^{k-1})+\frac{3}{4}k\right)-2k.
\end{align*}
The function $h(k)=\frac{27}{64}(1-2^{k-1})+\frac{3}{4}k$ is decreasing for $k\geq 5$ as 
$$
\frac{d}{dk}h(k)=-\frac{27}{128}\cdot2^k \ln(2)   + \frac{3}{4}<0.
$$
Since $h(5)=-165/64<-2$, we have $h(k)\leq  -165/64$ for all $k\geq 5$.
Hence,  $g(k,0)$ is decreasing for $k\geq 5$. Since $g(4,0)=-13/2$ and $g(5,0)=-175$ we deduce that 
$
g(k,0)<0
$
foll $k\geq 4$, which covers all cases of $i=0$ and $n\geq 10.$



Now, we prove that $g(k,i)$ is a decreasing function of $i$.
\begin{align*}
\frac{\partial}{\partial i}g(k,i) &= 
\frac{27}{64}\cdot 2^{k+i}\ln(2) + k \cdot 2^{k+i}\ln(2) + 2^{k-1} - \frac{27}{64}\cdot 2^{2k+i}\ln(2) -1\\
&= 2^{k+i}\ln(2) \left( \frac{27}{64} +k -\frac{27}{64}\cdot 2^k \right) + 2^{k-1} -1<0
\end{align*}
for $k\geq 4$, since $h(k) < -2$ for $k\geq 5$ and $h(4)=3/64$ which leads to $$h(4) - \frac{27}{128}\cdot 2^4 + \frac{1}{4}(4) = -\frac{149}{64} < 0.$$
Thus, $g(k,i)$ is a decreasing function of $k$ for all integers $k\ge 4$ and $i\ge 1$. Since $g(k,1), g(k,0)<0$ for all $k\geq 4$, we now conclude that $g(k,i)<0$ for all $k\geq 4$. Hence,
\begin{align*}
    \sigma_1(G)+\sigma_1(\overline{G}) &\leq f(n-1) +\frac{27}{64}\cdot2^{k+i}+\frac{27}{64}\cdot2^{k}+ k\cdot2^{k+i}+(k+i)\cdot2^{k}\\
    &< f(n-1)+f(n)-f(n-1)=f(n),
    \end{align*}
which completes the proof for this case. 

From now on, we only consider $G$ such that the vertex degrees in $G$ and $\overline{G}$ can only be $k$ or $n-1-k$ for some $0\leq k \le 3$.


{\bf Case III:} Suppose that $k=3$ and $v$ is a vertex of $G$ such that $\deg_G(v) =k=3$. 

Suppose that every element of $N_G(v)$ has at least one neighbour in $V(G)\setminus N_G[v]$. Then
\begin{align*}
\sigma_1(G)+\sigma_1(\overline{G})&=\sigma_1((G-v)+\sigma_1(\overline{G}-v)+\sigma_1(G-N_G[v])+\sigma_1(\overline{G}-N_{\overline{G}}[v])\\
    &+\sum_{x\in N_G(v)}\sigma_0(G-N_G[v]-N_G[x]) +\sum_{y\in N_{\overline{G}}(v)}\sigma_0(\overline{G}-N_{\overline{G}}[v]-N_{\overline{G}}[y])\\
    &\leq f(n-1) +\frac{27}{64}\cdot 2^{n-4}+3+ 3\cdot2^{n-5}+(n-4)\cdot 2^{3}.
    \end{align*}
Thus,
\begin{align*}
g_3(n) &=3+\frac{27}{64}\cdot 2^{n-4}+ 3\cdot2^{n-5}+(n-4)\cdot 2^{3}-\frac{27}{64}\cdot2^{n-1}-(n-1)\\
&=-93\cdot 2^{n - 10} +9n - 30.
\end{align*}
Note that
$$
\frac{d}{dn}g_3(n)=-93\cdot2^{n - 10}\ln(2) + 9 <-93\cdot2^{n - 11} + 9<0
$$
for $n\geq 10$, and $g_3(10)=-33<0$.

Suppose that there is a vertex $u$ in $N_G(v)=\{u,z,w\}$ which does not have a neighbour in $V(G)\setminus N_G(v)$. Then, $u\in N_{\overline{G}}(y)$ for any $y\in N_{\overline{G}}(v)$ and thus
$$
\sum_{y\in N_{\overline{G}}(v)}\sigma_0(\overline{G}-N_{\overline{G}}[v]-N_{\overline{G}}[y])\leq (n-4)\cdot 2^{3-1}.
$$
Since $|V(\overline{G}-N_{\overline{G}}[v])|\leq 3$, we have $\sigma_1(\overline{G}-N_{\overline{G}}[v])\leq 3$. 

If $G-N_G[v]$ has no edges, then in order to satisfy the condition that $n\geq 10$, we must have $\deg_G(z)>3$ for some $z\in N_G(v)$. Thus, $\deg_G(z)\geq n-4$ and so $z$ has at least $n-4-3=n-7$ neighbours in $G-N_G[v]$. Therefore,
$$
\sum_{x\in N_G(v)}\sigma_0(G-N_G[v]-N_G[x]) \leq 2\cdot 2^{n-4} + 2^{n-4-(n-7)}.
$$
Thus, we have
\begin{align*}
\sigma_1(G)+\sigma_1(\overline{G})
    &=\sigma_1((G-v)+\sigma_1(\overline{G}-v)+\sigma_1(G-N_G[v])+\sigma_1(\overline{G}-N_{\overline{G}}[v])\\
    &+\sum_{x\in N_G(v)}\sigma_0(G-N_G[v]-N_G[x]) +\sum_{y\in N_{\overline{G}}(v)}\sigma_0(\overline{G}-N_{\overline{G}}[v]-N_{\overline{G}}[y])\\
    &\leq f(n-1) +\frac{27}{64}\cdot 2^{n-4}+3+ 2 \cdot2^{n-4}+2^{3}+(n-4)\cdot2^{2}.
    \end{align*}
We then have
\begin{align*}
g_3(n)
    &= \frac{27}{64}\cdot 2^{n-4}+3+ 2 \cdot2^{n-4}+2^{3}+(n-4)\cdot2^{2}-\frac{27}{64}\cdot2^{n-1}-(n-1)\\
&=-61\cdot 2^{n - 10} + 3n - 4,
\end{align*}
with
$$
\frac{d}{dn}g_3(n)=-61\cdot2^{n - 10}\ln(2) + 3 <-61\cdot2^{n - 11} + 3<0
$$
for $n\geq 10$, and $g_3(10)=-17<0$.

If $G-N_G[v]$ contains at least one edge, we have
\begin{align*}
\sigma_1(G)+\sigma_1(\overline{G})
    &=\sigma_1((G-v)+\sigma_1(\overline{G}-v)+\sigma_1(G-N_G[v])+\sigma_1(\overline{G}-N_{\overline{G}}[v])\\
    &+\sum_{x\in N_G(v)}\sigma_0(G-N_G[v]-N_G[x]) +\sum_{y\in N_{\overline{G}}(v)}\sigma_0(\overline{G}-N_{\overline{G}}[v]-N_{\overline{G}}[y])\\
    &\leq f(n-1) +\frac{27}{64}\cdot 2^{n-4}+3+ 3\cdot 3\cdot2^{n-4-2}+(n-4)\cdot2^{2}.
    \end{align*}
    For this subcase, $g_3(n)$ is given by
\begin{align*} 
g_3(n)&=\frac{27}{64}\cdot2^{n-4}+ 3+9\cdot2^{n-6}+(n-4)\cdot2^{2}-\frac{27}{64}\cdot 2^{n-1}-(n-1)\\
&=-45\cdot 2^{n - 10} +5n - 14,
\end{align*}
with
$$
\frac{d}{dn}g_3(n)=-45\cdot 2^{n - 10}\ln(2) + 5<-45\cdot 2^{n - 11} +5<0
$$
for $n\geq 10$, and $g_3(10)=-9< 0.$ 
     

%

In each of the wo subcases, we have $\sigma_1(G)+\sigma_1(\overline{G})< f(n)$.

From now on, we assume that $G$ and $\overline{G}$ can only have vertex degrees $k$ of $n-1-k$ for $1\le k\le 2$.

\textbf{Case IV: } $G$ contains a vertex $v$ of degree $1$ with neighbour $u$, and all vertices in $G$ and $\overline{G}$ have degree in $\{1,2,n-2,n-3\}$.
 Then, by Lemma \ref{lem:recursive-formula}, we have
    \begin{align*}
        \sigma_1(G) &=\sigma_1(G-v)+\sigma_1(G-N_G[v])+\sum_{x\in N_G(v)}\sigma_0(G-(N_G[x]\cup N_G[v]))\\
        &= \sigma_1(G-v) + \sigma_1(G-v-u)+\sigma_0(G-v-N_G[u])
    \end{align*}
and
    \begin{align*}
    \sigma_1(\overline{G})&=\sigma_1(\overline{G}-v)+\sigma_1(\overline{G}-N_{\overline{G}}[v])+\sum_{x\in N_{\overline{G}}(v)}\sigma_0(\overline{G}-N_{\overline{G}}[x]\cup N_{\overline{G}}[v])\\
        &=\sigma_1(\overline{G}-v) + 0 + (n-2)(2)\\
        &=\sigma_1(\overline{G}-v) + 2n-4.
    \end{align*}
Suppose that $u$ has degree in $\{n-2,n-3\}$. Then
\begin{align*}
        \sigma_1(G) &= \sigma_1(G-v) + \sigma_1(G-v-u)+\sigma_0(G-v-N_G[u])\\
        &\leq \sigma_1(G-v) + \sigma_1(G-v-u)+4.
    \end{align*}
    
    Thus,
    \begin{align*}
        \sigma_1(G) + \sigma_1(\overline{G}) &= \sigma_1(G-v)+ \sigma_1(G-v-u)+4 + \sigma_1(\overline{G}-v) + 2n-4.
    \end{align*}
    By the induction assumption, and by Theorem \ref{thm:if-G-is-a-graph-of-order-n-6-then-sigma-1-at-most-max}, we have
    \begin{align*}
        \sigma_1(G) + \sigma_1(\overline{G}) &\le \frac{27}{64}\cdot 2^{n-1} + \frac{1}{2}(n+1)(n-4) + \frac{27}{64}\cdot 2^{n-2} + 2n\\
        &=\frac{27}{64}\cdot 2^n + \frac{1}{2}(n+2)(n-3)-\frac{27}{256}2^n + n + 1, \\
        &=\frac{27}{64}\cdot 2^n + \frac{1}{2}(n+2)(n-3)-\frac1{10}2^n + n + 1\\
        &<\frac{27}{64}\cdot 2^n + \frac{1}{2}(n+2)(n-3)
    \end{align*}
    for $n\geq 10$ as desired.

    For the rest of this case, we assume that any vertex $v$ of degree $1$ in $G$ is only adjacent to a vertex $u$ with degree $\{1,2\}$.
    
    Suppose that $\deg_G(u)=1$. Then, $G$ is of the form $K_2\cup G'$ for some $G'$ with order $n-2$. The degrees in $G$ has to be in $\{1,2,n-2,n-3\}$. Note that $G'$ cannot have a vertex of degree $n-2$, as there would be not enough vertices. 

    Suppose that $G'$ has a vertex $w$ of degree $n-3$. Then $N_{G'}(w)=V(G')$. If $G'$ is a star, we can choose a leaf in $G'$ instead of $v$ and get the same case as in the above subcase. So, we can assume that $G'$ is not a star. At least two neighbours $x$ and $y$ of $w$ are adjacent. Let $S'_{n-2}$ be the tree obtained from a star by adding one more edge, then we have
    $$
    \sigma_{0}(G')\leq \sigma_0(S'_{n-2})= 1+3\cdot 2^{n-2-3}.
    $$
    Thus
    \begin{align*}
        \sigma_1(G) &= 3\sigma_1(G-u-v)+\sigma_0(G-u-v)
        \leq 3\sigma_1(G-u-v) +1+3\cdot 2^{n-2-3}
    \end{align*}
 \begin{align*}
        \sigma_1(\overline{G}) &= \sigma_1(\overline{G-u-v})+2(n-2).
    \end{align*}
    
    Thus,
    \begin{align*}
        \sigma_1(G) + \sigma_1(\overline{G}) &= \sigma_1(G-v-u)+\sigma_1(\overline{G-v-u})+ 2\sigma_1(G-v-u)+1+3\cdot 2^{n-5} + 2n-4.
    \end{align*}
    By the induction assumption, and by Theorem \ref{thm:if-G-is-a-graph-of-order-n-6-then-sigma-1-at-most-max}, we have
\begin{align*}
        \sigma_1(G) + \sigma_1(\overline{G}) &\le \frac{27}{64}\cdot 2^{n-2} + \frac{1}{2}n(n-5) + \frac{27}{64}\cdot 2^{n-1} +1+3\cdot 2^{n-5}+2n-4\\
        &=\frac{27}{64}\cdot 2^{n}+\frac12(n+2)(n-3)- \frac{3}{256}\cdot 2^n\\
        &<  \frac{27}{64}\cdot 2^{n}+\frac12(n+2)(n-3).
    \end{align*}
    

    As a last subcase in this case, consider the case when $\deg(u)=2$, say $N_G(u)=\{v,w\}$. Then,
    \begin{align*}
    \sigma_1(G)
    &= \sigma_1(G-v)+\sigma_1(G-v-u)+\sigma_0(G-v-u-w)\\
    \end{align*}
    with 
    \begin{align*}
    \sigma_1(\overline{G})
    &=\sigma_1(\overline{G}-v)+2(n-2).
    \end{align*}
    
    If $G-v-u-w$ does not have an edge, then $\deg_G(w)=n-2$ (recall the restriction that $G$ can only have degrees in $\{1,2,n-2,n-3\}$),
    $$
    \sigma_1(G-v-u)=n-3
    $$
    and
    \begin{align*}
    \sigma_1(G) + \sigma_1(\overline{G})
    &\le \frac{27}{64}\cdot2^{n-1}+ \frac{1}{2}(n+1)(n-4)+n-3+2^{n-3}+2(n-2)\\
    &=\frac{27}{64}2^{n}+ \frac{1}{2}(n+2)(n-3) -\frac{11}{128}\cdot2^n + 2n - 6\\
    &< \frac{27}{64}\cdot 2^{n}+ \frac12(n+2)(n-3)
    \end{align*}
    as $-\frac{11}{128}\cdot2^n + 2n - 6\leq -74$ for $n\geq 10.$

    Now suppose that 
$G-v-u-w$ has an an edge, then 
    $$
    \sigma_0(G-v-u-w)\leq 3\cdot 2^{n-5}
    $$
    and
    \begin{align*}
    \sigma_1(G) + \sigma_1(\overline{G})
    &\le \frac{27}{64}\cdot 2^{n-1}+ \frac12(n+1)(n-4)+ \frac{27}{64}\cdot2^{n-2} +3\cdot 2^{n-5}+2(n-2)\\
    &=\frac{27}{64}\cdot 2^{n}+ \frac12(n+2)(n-3) -\frac{3}{256}\cdot 2^n + n - 3\\
    &< \frac{27}{64}\cdot 2^{n}+ \frac{1}{2}(n+2)(n-3),
    \end{align*}
     as $-\frac{3}{256}\cdot2^n + n - 3\leq -5$ for $n\geq 10.$
    
{\bf Case V:} As a final case, we consider the case when all vertices in both $G$ and $\overline{G}$ can only have degree in $\{2,n-3\}$.

Suppose that $G$ has a vertex $v$ of degree $n-3$. Then
     \begin{align*}
        \sigma_1(G) &=\sigma_1(G-v)+\sigma_1(G-N_G[v])+\sum_{u\in N_G(v)}\sigma_0(G-(N_G[u]\cup N_G[v])) \\
        &\leq \sigma_1(G-v)+1+(n-3)(4).
    \end{align*}

    Let $N_{\overline{G}}(v)=\{u,w\}$. Suppose that one of $u$ and $w$ has degree $n-3$ in $\overline{G}$. Then
 \begin{align*}
        &\sigma_1(\overline{G})=\sigma_1(\overline{G}-v)+\sigma_1(\overline{G}-N_{\overline{G}}[v])+\sum_{z\in N_{\overline{G}}(v)}\sigma_0(\overline{G}-N_{\overline{G}}[z]\cup N_{\overline{G}}[v])\\
        &\leq\sigma_1(\overline{G}-v) + \sigma_1(\overline{G}-v-u-w)  + \sigma_0(\overline{G}-v-u-N_{\overline{G}}[w])  
        +\sigma_0(\overline{G}-v-w-N_{\overline{G}}[u])\\
        &\leq\sigma_1(\overline{G}-v) + \sigma_1(\overline{G}-v-u-w)  + 2^{n-3}+4.
    \end{align*}
Thus,
    \begin{align*}
        \sigma_1(G) + \sigma_1(\overline{G}) 
        &\leq \sigma_1(G-v)  + \sigma_1(\overline{G}-v) +\sigma_1(G-v-u-w)+1+4n-12+2^{n-3}+4\\
        &\leq \frac{27}{64}\cdot2^{n-1}+\frac12(n-1+2)(n-1-3)+\frac{27}{64}\cdot 2^{n-3} +1+4n-12+2^{n-3}+4\\
        &= \frac{27}{64}\cdot 2^{n}+\frac12(n+2)(n-3)-\frac{17}{512}\cdot 2^n + 3n - 6\\
        &<\frac{27}{64}\cdot 2^{n}+\frac12(n+2)(n-3)
    \end{align*}
    as $-\frac{17}{512}\cdot2^n + 3n - 6\leq -10$ for $n\geq 10.$

For the rest of this case, we assume that both $u$ and $w$ have degree $2$ in $\overline{G}$.

    Suppose that $uw\in E(\overline{G})$, then $v,u,w$ form a $C_3$ connected component of $\overline{G})$, that is $\overline{G}=C_3\cup H$ for some graph $H$ of order $n-3\geq 7$. $H$ does not have enough vertices to have a vertex of degree $n-3$. Thus, all vertices in $H$ has degree $2$: $H$ is a cycle and hence contains a $P_4$ subgraph. Thus,
    $$
    \sigma_0(\overline{G}-v-u-w)\leq 8\cdot 2^{n-7}
    $$
    and
    \begin{align*}
        \sigma_1(\overline{G})&=\sigma_1(\overline{G}-v)+\sigma_1(\overline{G}-N_{\overline{G}}[v])+\sum_{z\in N_{\overline{G}}(v)}\sigma_0(\overline{G}-N_{\overline{G}}[z]\cup N_{\overline{G}}[v])\\
        &\leq\sigma_1(\overline{G}-v) + \sigma_1(\overline{G}-v-u-w)  + \sigma_0(\overline{G}-v-u-w)  
        +\sigma_0(\overline{G}-v-w-u)\\
        &\leq\sigma_1(\overline{G}-v) + \sigma_1(\overline{G}-v-u-w)  + 8\cdot 2^{n-6}.
    \end{align*}
    Thus,
    \begin{align*}
        \sigma_1(G) + \sigma_1(\overline{G}) 
        &\leq \sigma_1(G-v)  + \sigma_1(\overline{G}-v) +\sigma_1(G-v-u-w)+1+4n-12+8\cdot 2^{n-6}\\
        &\leq \frac{27}{64}\cdot 2^{n-1}+\frac12(n-1+2)(n-1-3)+\frac{27}{64}\cdot 2^{n-3} +1+4n-12+8\cdot 2^{n-6}\\
        &= \frac{27}{64}\cdot2^{n}+\frac12(n+2)(n-3)-\frac{17}{512}\cdot2^n + 3n - 10\\
        &<\frac{27}{64}\cdot2^{n}+\frac12(n+2)(n-3)
    \end{align*}
    as $-\frac{17}{512}\cdot2^n + 3n - 10\leq -14$ for $n\geq 10.$

     Suppose that $uw \notin E(\overline{G})$, and let $u'$ and $w'$ be the neighbours of $u$ and $w$ other than $v$, respectively. Then
      \begin{align*}
        &\sigma_1(\overline{G})=\sigma_1(\overline{G}-v)+\sigma_1(\overline{G}-N_{\overline{G}}[v])+\sum_{z\in N_{\overline{G}}(v)}\sigma_0(\overline{G}-N_{\overline{G}}[z]\cup N_{\overline{G}}[v])\\
        &\leq\sigma_1(\overline{G}-v) + \sigma_1(\overline{G}-v-u-w)  + \sigma_0(\overline{G}-v-u-w-w')+\sigma_0(\overline{G}-v-w-u-u').
    \end{align*}
    Thus,
    \begin{align*}
        \sigma_1(G) + \sigma_1(\overline{G}) 
        &\leq \sigma_1(G-v)  + \sigma_1(\overline{G}-v) +\sigma_1(G-v-u-w)+1+4n-12+2\cdot 2^{n-4}\\
        &\leq \frac{27}{64}\cdot 2^{n-1}+\frac12(n-1+2)(n-1-3)+\frac{27}{64}\cdot2^{n-3} +1+4n-12+2\cdot 2^{n-4}\\
        &= \frac{27}{64}\cdot2^{n}+\frac12(n+2)(n-3)-\frac{9}{512}\cdot2^n + 2n - 7\\
        &<\frac{27}{64}\cdot2^{n}+\frac12(n+2)(n-3)
    \end{align*}
    as $-\frac{9}{512}\cdot2^n + 2n - 7\leq -5$ for $n\geq 10.$

   This completes the proof of Theorem \ref{thm:max-general-graphs}.
\end{proof}

\newpage

\bibliographystyle{abbrv} 
\bibliography{references}

\medskip 
\appendix
\section{A SAGEMATH code that show all graphs $G$ of order $n$ that have the largest $\sigma_1(G)+\sigma_1(\overline{G})$, for $1\leq n\leq 9$}
\label{App}

\includepdf[
  pages=-,
  pagecommand={\thispagestyle{plain}}
]{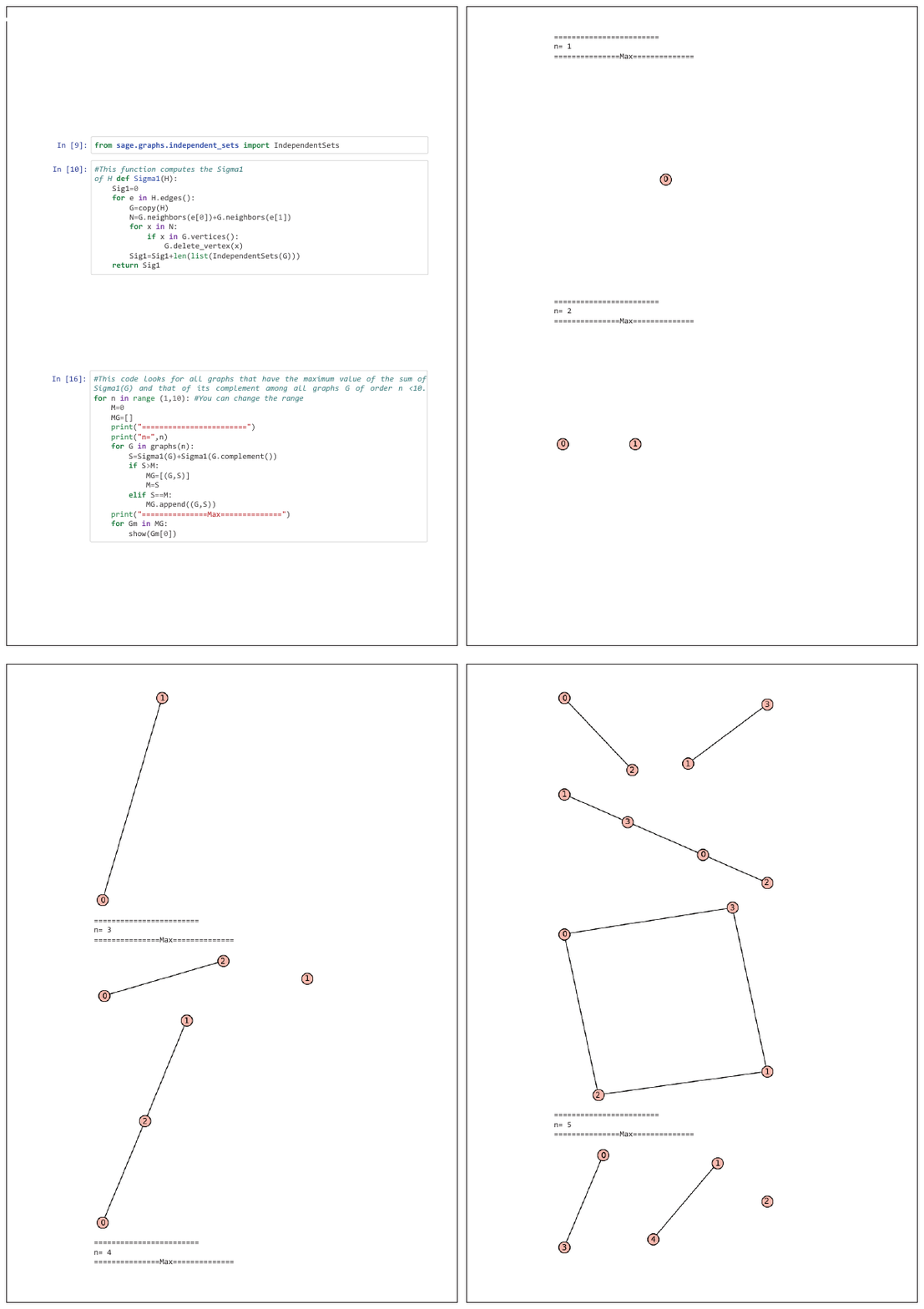}

\end{document}